\newtheorem{theorem}{Theorem}
\newtheorem{lemma}{Lemma}
\newtheorem{remark}{Remark}
\newtheorem{corollary}{Corollary}
\begin{document}

	\title{Prime Values of Bivariate Polynomials without Constant Terms: A Recursive Algorithm}
		
	\author{K. Lakshmanan \\
		Department of Computer Science and Engineering, \\ Indian Institute of Technology (BHU),\\
		Varanasi 221005, India\\
		Email: lakshmanank.cse@iitbhu.ac.in}
	
	\date{}
	
	\maketitle
	
	\begin{abstract}
		We investigate the computational problem of determining whether a bivariate polynomial with non-negative coefficients and no constant term can attain a prime value. While classical conjectures such as Bouniakowsky's provide necessary conditions for univariate prime-representing polynomials, we introduce a new recursive algorithm that efficiently certifies when a bivariate polynomial form can produce no prime values at all.
		
		Our method is elementary and constructive, based on analyzing gcd-divisibility patterns arising from recursive substitutions into the polynomial. The obstruction criterion obtained leads to an efficient and elementary algorithm that certifies when a polynomial form cannot produce any prime values. The result is stronger than what is implied by the negation of Bouniakowsky’s condition and applies to a wide class of polynomials, including transformations of univariate forms. We provide illustrative examples, analyze the complexity of the method, and discuss its connections to existing conjectures and possible generalizations.\\
		\\
		\textbf{Keywords: } Prime-representing polynomials, Bivariate polynomials, Recursive methods, Elementary number theory, Bouniakowsky conjecture \\
		\\
		\textbf{Mathematics Subject Classification 2020:	11A41, 11C08}
	\end{abstract}



		
	

%
	
	\section{Introduction and Motivation}
	
	The question of when a polynomial takes on prime values has captivated mathematicians \cite{cox,prime1,murty}. One of the earliest known examples is Euler's famous quadratic $f(x) = x^2 + x + 41$, which produces primes for a surprising range of small integer inputs. This phenomenon naturally led to deeper questions about what kinds of polynomials can produce primes infinitely often.
	
	In the univariate setting, the well-known \textit{Bouniakowsky conjecture} (\cite{bouniakowsky}, p. 323 \cite{lang})  posits that any irreducible polynomial with integer coefficients, positive leading coefficient, and no fixed prime divisor should yield infinitely many prime values as its input ranges over the integers. Despite its simplicity, this conjecture remains unresolved and is closely related to deep unsolved problems in analytic number theory.
	
	Far less is known, however, in the multivariate case. Polynomials in two or more variables introduce structural flexibility that comes with new complexity. In particular, very little work has been done to understand when bivariate polynomials—especially those lacking constant terms—can represent primes. This raises both theoretical and algorithmic questions: under what conditions can a polynomial be ruled out from producing primes, and can such non-primality be verified constructively?
	
	In this paper, we address these questions by formulating a recursive condition that, when satisfied, guarantees that a bivariate polynomial with non-negative coefficients and no constant term represents no prime values. The condition is elementary, effectively checkable, and, in certain cases, stronger than the negation of the Bouniakowsky condition. 
	
	In addition to giving a constructive method, we explore the implications of our result for the broader theory of prime-generating polynomials, and suggest directions for extending this work to multivariate and modular settings.

	\section{Background and Related Work}
	
	The Bouniakowsky conjecture, proposed in 1857, generalizes the intuition behind Euler's polynomial and stands as one of the fundamental unsolved problems in the theory of prime-producing polynomials. It asserts that if $f(x) \in \mathbb{Z}[x]$ is an irreducible polynomial with positive leading coefficient and no fixed prime divisor (i.e., $\gcd(f(n) : n \in \mathbb{Z}) = 1$), then $f(x)$ takes on prime values infinitely often as $x$ ranges over the integers.
	
	Despite its elementary formulation, the conjecture remains unproven even in degree 2. It is widely believed to be true, and is supported by strong heuristic and probabilistic arguments. In particular, its truth is implied by a special case of Schinzel's Hypothesis H \cite{bodin}, a deep generalization that concerns simultaneous prime values of several polynomials. A special case of this conjecture is the famous unresolved conjecture whether there are infinitely many primes of the form $X^2+1$ \cite{xsquare}.
	
	Much of the existing work has focused on univariate polynomials. The case of multivariate polynomials, especially those with more algebraic structure or constraints (e.g., missing constant term, positivity), is far less explored. One of the challenges in the multivariate case is the richer geometry of the input space, which allows for more flexible behavior, including reducibility and factor interactions that do not appear in the univariate setting.
	
	In particular, bivariate polynomials of the form
	\[
	f(x, y) = \sum_{i, j} k_{ij} x^i y^j, \quad \text{with } k_{ij} \geq 0 \text{ and } k_{00} = 0,
	\]
	introduce a structure that is both constrained (due to positivity and lack of a constant term) and yet potentially capable of prime generation. The absence of a constant term guarantees that $f(0, 0) = 0$, and the positivity ensures monotonicity in most natural variable orderings.
	
	Previous work has not provided general criteria for when such polynomials might represent primes, nor has it addressed whether elementary recursive conditions could be used to check this property. Our work provides a simple, constructive criterion to determine when a bivariate polynomial fails to represent any prime number. This condition offers a new perspective on prime-producing polynomials and highlights a contrast with the expectations set by the Bouniakowsky conjecture, particularly by offering a stronger implication in the case of non-prime representability.
	
	\section{Definitions and Setup}
	
	We consider bivariate polynomials of the form
	\[
	f(x, y) = \sum_{i=0}^{d_1} \sum_{j=0}^{d_2} k_{ij} x^i y^j,
	\]
	where all coefficients $k_{ij} \in \mathbb{Z}_{\geq 0}$ and the constant term is absent, i.e., $k_{00} = 0$. We refer to such polynomials as \textit{positive bivariate polynomials without constant terms}.
	
	A polynomial $f(x,y)$ is said to \emph{represent a prime} if there exists a pair of positive integers $(x, y) \in \mathbb{Z}^+ \times \mathbb{Z}^+$ such that $f(x, y)$ is a prime number. We are particularly interested in identifying such pairs through recursive or lexicographic enumeration, rather than relying on random sampling or deep analytic results.
	
	Note that the positivity of the coefficients implies that $f(x,y)$ is monotonic in both variables for $x, y \in \mathbb{Z}_{\geq 0}$. The absence of the constant term ensures that $f(0,0) = 0$, avoiding trivial representations of constant values.
	
	Let $S_f$ denote the set of all positive integers represented by $f(x,y)$, i.e.,
	\[
	S_f := \{ f(x, y) \in \mathbb{Z}^+ : x, y \in \mathbb{Z}^+ \}.
	\]
	Our goal is to characterize when $S_f$ contains at least one prime number. Moreover, we seek to do this via a recursive algorithm that scans through the values of $f(x,y)$ in lexicographic order, detecting primes early without requiring an exhaustive search.
	
	We will later define a precise condition—based on this lexicographic recursion under which a polynomial $f(x,y)$ is guaranteed to represent a prime. The subsequent sections will describe this condition, provide examples, and compare it to expectations derived from the Bouniakowsky conjecture.
	
	\section{Main Result}
	
	In this section, we present a constructive condition under which a positive bivariate polynomial without a constant term can represent a prime number. We begin by recalling the notion of \textit{prime-representing} polynomials introduced earlier, and proceed to build a simple sufficient condition through a sequence of lemmas.
	
	Let $f(x, y)$ be a bivariate polynomial with non-negative integer coefficients and no constant term, as described in Section 3. The goal is to determine whether the set
	\[
	S_f := \{ f(x, y) \mid x, y \in \mathbb{Z}^+ \}
	\]
	contains a prime number. The following lemmas will aid in formulating our main result.
	
	\begin{lemma}
		Let $f(x, y) = \sum_{i+j>0} k_{ij} x^i y^j$ be a bivariate polynomial with $k_{ij} \in \mathbb{Z}_{\geq 0}$ and $k_{00} = 0$. Then $f(x, y) > 0$ for all $x, y \in \mathbb{Z}^+$.
	\end{lemma}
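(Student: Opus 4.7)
\medskip

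\noindent\textbf{Proof plan.} The plan is to observe that the statement reduces to a term-by-term positivity argument, once we note the standing (and implicit) non-triviality assumption that at least one coefficient $k_{ij}$ with $i+j>0$ is strictly positive; otherwise $f\equiv 0$, which would contradict the use of the polynomial as a candidate to represent primes in the sequel.

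First I would fix $x,y\in\mathbb{Z}^+$ and record the elementary fact that $x^i y^j\geq 1$ for every pair of non-negative integers $i,j$, since a positive integer raised to a non-negative integer power is a positive integer. Consequently each summand $k_{ij}x^iy^j$ is a non-negative integer, and it is strictly positive precisely when $k_{ij}\geq 1$. Second, I would invoke the non-triviality assumption to pick at least one index pair $(i_0,j_0)$ with $i_0+j_0>0$ and $k_{i_0j_0}\geq 1$, so that the corresponding summand satisfies $k_{i_0j_0}x^{i_0}y^{j_0}\geq 1$. Adding this strictly positive contribution to the remaining non-negative summands yields $f(x,y)\geq 1>0$, as required.

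There is essentially no obstacle here: the hypothesis $k_{00}=0$ is used only to make the statement non-vacuous (it rules out a positive constant term that would make the bound trivial), while the real content is that positivity of the inputs together with non-negativity of the coefficients forces each monomial to contribute non-negatively. The main thing to be careful about is to state explicitly the convention that $f$ is not the zero polynomial, so that the claimed strict inequality $f(x,y)>0$ holds rather than just $f(x,y)\geq 0$.
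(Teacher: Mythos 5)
Your proof is correct and follows essentially the same route as the paper's: both arguments note that $x^iy^j\geq 1$ for $x,y\in\mathbb{Z}^+$ and that at least one coefficient $k_{i_0j_0}$ with $i_0+j_0>0$ is positive, so the sum is at least $1$. Your explicit flagging of the non-triviality assumption (that $f$ is not the zero polynomial) is a welcome clarification, but the paper's proof invokes the same hypothesis, so there is no substantive difference.
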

	
	\begin{proof}
		Since $x, y \geq 1$ and all coefficients $k_{ij} \geq 0$ with at least one $k_{ij} > 0$ for $i + j > 0$, the polynomial evaluates to a sum of positive terms for all $x, y \in \mathbb{Z}^+$. Hence $f(x, y) > 0$.
	\end{proof}
	
	\begin{lemma}
		The polynomial $f(x, y)$ assumes infinitely many values as $(x, y)$ ranges over $\mathbb{Z}^+ \times \mathbb{Z}^+$.
	\end{lemma}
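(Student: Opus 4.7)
The plan is to reduce the bivariate claim to a univariate one by restricting to the diagonal $y=x$. Define $g(n) := f(n,n)$ for $n \in \mathbb{Z}^+$. Expanding yields
\[
g(n) = \sum_{i,j} k_{ij}\, n^{i+j} = \sum_{d \geq 1} \Bigl(\sum_{i+j=d} k_{ij}\Bigr)\, n^d,
\]
so $g$ is a polynomial in $n$ whose coefficient at $n^d$ is the sum of the $k_{ij}$ with $i+j=d$.

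Next I would argue that $g$ is non-constant. Because $f$ is a genuine polynomial with $k_{00}=0$ but $k_{ij}$ not all zero (otherwise there is nothing to discuss), some $k_{ab} > 0$ with $a+b \geq 1$. Since all $k_{ij}$ are non-negative, there is no possibility of cancellation: the coefficient of $n^{a+b}$ in $g$ is at least $k_{ab} > 0$. Hence $g$ has positive degree and strictly positive leading coefficient.

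The final step is to observe that any univariate polynomial of positive degree with positive leading coefficient is strictly increasing for all sufficiently large $n$ and tends to $+\infty$. Therefore $g(n)$ attains infinitely many distinct positive integer values as $n$ ranges over $\mathbb{Z}^+$, and each such value lies in $S_f$ by definition. This gives the desired conclusion.

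There is essentially no hard step; the only subtlety worth flagging is why restricting to the diagonal is safe. This is precisely where the hypothesis of non-negative coefficients is used: without it one would need to rule out cancellations among the $k_{ij}$ with $i+j=d$ before concluding that $g$ has positive degree. With non-negativity, existence of a single positive $k_{ab}$ with $a+b>0$ suffices.
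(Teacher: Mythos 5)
Your proof is correct and follows essentially the same route as the paper: both restrict to the diagonal $(t,t)$ and use the non-negativity of the coefficients to conclude that the resulting univariate polynomial has positive degree and positive leading coefficient, hence grows without bound and takes infinitely many values. Your write-up is slightly more explicit about why no cancellation can occur, but the underlying argument is identical.
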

	
	\begin{proof}
		Let $k_{i_0 j_0} \neq 0$ be a term of maximal total degree $d = i_0 + j_0$. Then, for large $t$, we have $f(t, t) \geq k_{i_0 j_0} t^d$, which grows unboundedly. Hence, the values of $f(x, y)$ are not confined to a finite set.
	\end{proof}
	
	We now define the recursive condition that leads to our main theorem. Let us enumerate the positive integer pairs $(x, y)$ in lexicographic order, i.e., $(1, 1), (1, 2), \dots, (2, 1), \dots$. For each such pair, we evaluate $f(x, y)$. If a prime value is encountered before any value is repeated (i.e., before a collision in the set of computed values), we say that the polynomial satisfies the \textit{early prime condition}.
	
	Let $f(x, y)$ be a bivariate polynomial with no constant term. Then we immediately have the following implication:
	\[
	\gcd(a, b) > 1 \Rightarrow f(a, b) \text{ is not prime}.
	\]
	This holds because if $\gcd(a, b) = d > 1$ and all monomials in $f$ have degree at least one, then every term of $f(a, b)$ will be divisible by $d$, and hence $f(a, b)$ is not a prime.
	
	We also observe the following equivalence:
	\begin{align}\label{equeq}
		\gcd(a, f(a, b)) = 1 \Rightarrow \gcd(a, b) = 1.
	\end{align}
	This reinforces the structure of how prime values may arise recursively. We now state a general structural result based on this observation:
	
	\begin{theorem}\label{th}
		Let $f(x, y)$ be a multivariate polynomial with no constant term. Then $f(x, y)$ takes non-prime values for infinitely many positive integers $x, y$.
	\end{theorem}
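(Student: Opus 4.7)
The plan is to exploit the gcd--divisibility observation made immediately before the theorem statement, namely that $\gcd(a,b)=d>1$ forces $d\mid f(a,b)$, and to apply it along the diagonal $(a,b)=(n,n)$. For each integer $n\ge 2$ we have $\gcd(n,n)=n>1$, so every monomial $k_{ij}n^i n^j$ with $i+j\ge 1$ is divisible by $n$; hence $n\mid f(n,n)$. The only thing that could prevent $f(n,n)$ from being composite would be the coincidence $f(n,n)=n$ (or $f(n,n)$ equal to $1$), so the remainder of the argument is a size estimate ruling this out for all but finitely many $n$.

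To make this precise I would invoke Lemma~2: let $k_{i_0 j_0}>0$ be a coefficient of a term of maximal total degree $d=i_0+j_0\ge 1$. Then $f(n,n)\ge k_{i_0 j_0}n^{d}$, and since $d\ge 1$ and $k_{i_0 j_0}\ge 1$ we get $f(n,n)/n \to \infty$; in particular $f(n,n)>n$ for all sufficiently large $n$. For any such $n\ge 2$, the number $f(n,n)$ is a positive multiple of $n$ strictly greater than $n$, so $f(n,n)=n\cdot q$ with $q\ge 2$, which is composite. This produces an infinite family of pairs $(n,n)\in\mathbb{Z}^+\times\mathbb{Z}^+$ on which $f$ takes non-prime values, proving the theorem.

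I do not anticipate a serious obstacle: the argument is essentially a one-line consequence of the divisibility observation plus the growth lemma already established. The only mild subtlety worth mentioning explicitly is the need to exclude the degenerate case in which $f$ is identically zero (ruled out by the convention that $f$ is a genuine polynomial with at least one nonzero coefficient $k_{ij}$ with $i+j\ge 1$), and the need to choose $n$ large enough so that $f(n,n)>n$, which follows from Lemma~2. The same diagonal construction works verbatim for any number of variables by taking $(n,n,\dots,n)$, which justifies the ``multivariate'' phrasing in the theorem.
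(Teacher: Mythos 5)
Your approach is the same as the paper's: the paper's own proof is a two-line sketch observing that $\gcd(x,y)>1$ forces the common factor to divide $f(x,y)$ and that there are infinitely many such pairs. Your diagonal specialization $(n,n)$ is exactly this idea, and you are in fact \emph{more} careful than the paper, because you notice that divisibility of $f(n,n)$ by $n$ alone does not yield compositeness --- one must also rule out $f(n,n)=n$ (and $f(n,n)=1$), a point the paper's sketch silently skips.

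However, the size estimate you use to close that gap is not quite right. From $f(n,n)\ge k_{i_0 j_0}n^{d}$ you conclude $f(n,n)/n\to\infty$, but this fails when the maximal total degree is $d=1$: for $f(x,y)=x$ one has $f(n,n)=n$, so the ratio is identically $1$ and $f(n,n)$ is prime whenever $n$ is. So the step ``$f(n,n)>n$ for all sufficiently large $n$'' is false as stated for the linear monomials $f=x$ and $f=y$ (more generally, the ratio is the constant $k_{10}+k_{01}$ for any linear $f$, which suffices only when that constant is at least $2$). The fix is easy and worth making explicit: if $f$ has any term of total degree $\ge 2$, then $f(n,n)\ge n^{2}>n$ for $n\ge 2$ and your argument goes through; if $f$ is linear with $k_{10}+k_{01}\ge 2$, then $f(n,n)=(k_{10}+k_{01})n$ is already composite; and in the two residual cases $f=x$ or $f=y$ one simply takes $x$ (respectively $y$) composite, which gives infinitely many non-prime values directly. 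With that case split your proof is complete and strictly more rigorous than the one in the paper.
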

	
	\begin{proof}[Sketch of Idea]
		Since $\gcd(x, y) > 1$ implies $f(x, y)$ is divisible by this common factor, and since there are infinitely many such $(x, y)$ with $\gcd(x, y) > 1$, the result follows.
	\end{proof}
	
	The classical Bouniakowsky condition for univariate polynomials requires:
	\begin{enumerate}
		\item The leading coefficient is positive.
		\item The polynomial is irreducible over $\mathbb{Z}$.
		\item The values $f(\mathbb{Z}^+)$ have no common divisor greater than 1.
	\end{enumerate}
	
	In analogy, we propose similar conditions for bivariate polynomials. Let $f(x, y)$ be a bivariate polynomial with positive coefficients and no constant term. We now present a criterion to determine when such a polynomial cannot represent a prime, inspired by recursive structure.
	
	\begin{theorem}\label{mth}
		Let $f(x, y)$ be a bivariate polynomial with non-negative coefficients and no constant term. Assume that for all $0 < x', y' \leq f(2,2)$ and for all positive integers $a, b, c, d, x, y$ such that
		\[
		x' = x + f(a, b), \quad y' = y + f(c, d),
		\]
		the following conditions hold:
		\begin{enumerate}
			\item If not all of $f(a,b)$, $f(c,d)$, and $f(x,y)$ are equal, then
			\[
			\gcd(f(a,b), f(c,d), f(x,y)) > 1.
			\]
			\item If $f(a,b) = f(c,d) = f(x,y)$, then this common value is not prime.
		\end{enumerate}
		Then $f(x, y)$ does not represent any prime.
	\end{theorem}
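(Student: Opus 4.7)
The plan is to exploit a single polynomial identity driven by the absence of a constant term. Since every monomial of $f$ has total degree at least one, expanding $f(x + u, y + v)$ in powers of $u$ and $v$ collects all summands containing $u$ or $v$ into $u\cdot A(x,y,u,v) + v\cdot B(x,y,u,v)$ with non-negative integer polynomials $A, B$, leaving the $u,v$-free part equal to $f(x, y)$. Specialising $u = f(a, b)$, $v = f(c, d)$, and writing $x' = x + f(a, b)$, $y' = y + f(c, d)$, this yields the master congruence
\[
f(x', y') \;\equiv\; f(x, y) \pmod{\gcd\bigl(f(a, b),\, f(c, d)\bigr)}.
\]
Hence any integer $g > 1$ dividing $f(a,b)$, $f(c,d)$, and $f(x,y)$ simultaneously also divides $f(x', y')$. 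Combined with strict monotonicity $f(x',y') > f(x,y) \geq g$ (which follows from $x' > x$, $y' > y$, and $f$ having at least one monomial of positive degree), this makes $g$ a proper divisor of $f(x', y')$, forcing the latter to be composite.

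With the master congruence in hand, I would argue by contradiction: assume $f(X, Y) = p$ is prime for some $X, Y \in \mathbb{Z}^+$. The aim is to realise $(X, Y)$ as $(x', y')$ of a decomposition whose witnesses $(a,b), (c,d), (x,y)$ trigger the hypothesis. When $(X, Y)$ itself lies inside the base window $[1, f(2,2)]^2$ and admits a positive-integer decomposition, hypothesis (1) directly delivers $g := \gcd(f(a,b), f(c,d), f(x,y)) > 1$ with $g \mid p$ and $g < p$, contradicting primality; or hypothesis (2) supplies a common composite value whose prime factor plays the role of $g$ and yields the same contradiction.

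The main obstacle is the reduction step: for a putative prime witness $(X, Y)$ with $X$ or $Y$ exceeding $f(2,2)$, one must descend into the base window before the hypothesis can fire. The natural move is constant-size peeling $(a,b) = (c,d) = (2,2)$, sending $(X, Y) \mapsto (X - f(2,2), Y - f(2,2))$, and applying the master congruence at each step so that the divisibility obtained in the base window lifts back to $f(X, Y)$. Formalising this descent — together with the boundary cases where one coordinate of $(X, Y)$ is too small to admit any positive-integer decomposition, and the degenerate sub-case of condition (2) in which the common value $f(x,y)=1$ yields no nontrivial divisor — is the technically delicate portion of the argument, and it is here that most of the proof's effort must be concentrated.
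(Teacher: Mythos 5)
Your master congruence is exactly the engine the paper uses: it is Lemma~\ref{eqlem} (the identity $f(x,y+a)=f(x,y)+a\cdot k(x,y,a)$ applied in each coordinate) packaged as item~4 of Corollary~\ref{cor:m}, which states that if $f(x+f(a,b),\,y+f(c,d))$ is prime then either $\gcd(f(x,y),f(a,b),f(c,d))=1$ or all three values are equal and prime. Up to that point you and the paper are on the same track, and that portion of your argument is sound (including your observation that one also needs $f(x',y')>g$ to rule out $f(x',y')=g$).

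The genuine gap is the descent, and you have correctly located it but not closed it --- and the repair you sketch does not work. The hypothesis of Theorem~\ref{mth} only constrains decompositions whose \emph{target} $(x',y')$ lies in the window $[1,f(2,2)]^2$; a tuple with $(a,b)=(c,d)=(2,2)$ has $x'=x+f(2,2)>f(2,2)$, so it is never among the tuples the hypothesis speaks about, and therefore supplies no divisor at any peeling step. Uniform peeling by $f(2,2)$ in both coordinates also only lands in the window when $X$ and $Y$ lie in the same band modulo $f(2,2)$ (a witness with $X=1$ and $Y$ huge is unreachable). And even when the bottom point $(x_0,y_0)$ does admit a decomposition covered by hypothesis (1), the common divisor $g$ obtained there divides $f(x_0,y_0)$ but not the peeled increments $f(2,2)$, so the master congruence $f(X,Y)\equiv f(x_0,y_0)\pmod{f(2,2)}$ does not transport $g$ up to $f(X,Y)$. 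For what it is worth, the paper's own proof has essentially the same hole: it assumes a prime witness with $x',y'>f(2,2)$, asserts without justification that a decomposition $x'=x+f(a,b)$, $y'=y+f(c,d)$ exists in the relevant range, and then invokes Corollary~\ref{cor:m}(4) even though such a decomposition lies outside the stated hypothesis (which quantifies only over $x',y'\le f(2,2)$); it also never treats witnesses inside the window. So your plan reproduces the paper's key lemma faithfully, but the step you call ``technically delicate'' is the entire content of the theorem, and neither your peeling strategy nor the paper's one-line assertion supplies it.
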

	
	\noindent This condition effectively blocks the recursive chain of evaluations from ever encountering a prime value.
	
	\begin{remark}[Minimal Recursive Obstruction Hypothesis]
		Suppose $f(x,y)$ is a bivariate polynomial with non-negative coefficients and no constant term, and assume that $f(x,y)$ does not represent any primes over the positive integers. Then we conjecture that there exists at least one set of positive integers $a,b,c,d,x,y$ such that, for $x' = x + f(a,b)$ and $y' = y + f(c,d)$ with $x', y' \leq f(2,2)$, one of the following conditions holds:
		\begin{enumerate}
			\item $\gcd(f(a,b), f(c,d), f(x,y)) > 1$, or
			\item $f(a,b) = f(c,d) = f(x,y)$ and this common value is not a prime.
		\end{enumerate}
		In contrast to the main theorem, which requires that these conditions hold for \emph{all} such choices, this weaker condition only asserts the existence of a single such recursive obstruction. This property may help characterize polynomials that fail to represent primes, even when the full sufficient condition does not apply.
	\end{remark}

	Moreover, observe that any univariate polynomial $f(x)$ with $f(0) = 0$ can be embedded into our framework by defining a bivariate version $g(x, y) = f(x)$, or more generally, $f(x) = g(x, 1)$. Thus, Theorem~\ref{mth} applies to univariate polynomials as a special case.
	
	In the univariate case, a similar condition becomes:
	\begin{theorem}
		Let $f(x)$ be a univariate polynomial with non-negative coefficients and no constant term. If for all $0 < x' \leq f(2)$ and all positive integers $a, x$ with $x' = x + f(a)$ the following hold:
		\begin{enumerate}
			\item If $f(a) \neq f(x)$, then $\gcd(f(a), f(x)) > 1$,
			\item If $f(a) = f(x)$, then $f(a)$ is not prime,
		\end{enumerate}
		then $f(x)$ does not represent any primes.
	\end{theorem}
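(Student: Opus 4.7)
Since $f$ has no constant term, write $f(X) = \sum_{i \geq 1} c_i X^i$. The engine of the argument is the substitution identity
\[
f(x + u) = f(x) + u \cdot h(x, u)
\]
for some $h \in \mathbb{Z}[x, u]$, obtained by expanding each $(x + u)^i$ via the binomial theorem and separating the purely-$x$ contribution. Specializing $u = f(a)$ yields $f(x + f(a)) \equiv f(x) \pmod{f(a)}$, so every common divisor of $f(x)$ and $f(a)$ also divides $f(x + f(a))$. This is the univariate analog of the divisibility observation that drives Theorem~\ref{mth}.

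The plan is then to argue by strong induction on $n$ that $f(n)$ is composite for every $n \geq 1$. In the base range $n \leq f(2)$ the hypothesis applies directly: pick any admissible decomposition $n = x + f(a)$ with $x, a \geq 1$. In case (1), the hypothesis forces $d := \gcd(f(a), f(x)) > 1$; the substitution identity then gives $d \mid f(n)$, and monotonicity of $f$ ensures $f(n) > f(x) \geq d$, so $f(n)$ is composite. In case (2), $f(a) = f(x) = v$ is non-prime; the identity gives $v \mid f(n)$ with $f(n) > v$, so again $f(n)$ is composite. For the inductive step at $n > f(2)$, I would select $a = 1$ and $x = n - f(1) \geq 1$, extract the factor $\gcd(f(x), f(1))$ from the identity, and invoke the induction hypothesis on $f(x)$ together with the gcd/non-primality structure already propagated from the base range.

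The hard part will be justifying that the recursive obstruction genuinely propagates beyond the regime $n \leq f(2)$ where the hypothesis is explicitly stated. Concretely, one must establish that for every $n > f(2)$ some decomposition $n = x + f(a)$ with $x, a < n$ still exhibits either a nontrivial common divisor of $f(x)$ and $f(a)$ or the equality $f(a) = f(x)$ with composite common value, and that the resulting factor lies strictly below $f(n)$. Pinning down the right choice of decomposition — plausibly $a = 1$ combined with the inductively established non-primality of $f(x)$ — and verifying the strict inequality $f(n) > \gcd(f(x), f(a))$ in all cases is the substantive technical step. Once that propagation is in place, the conclusion that $f$ represents no primes over the positive integers follows at once.
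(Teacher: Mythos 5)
Your substitution identity $f(x+u) = f(x) + u\,h(x,u)$ is exactly the univariate form of the paper's Lemma~\ref{eqlem}, and your case analysis in the range $n \le f(2)$ matches the mechanism behind Corollary~\ref{cor:m}; up to that point you are on the paper's track (the paper gives no separate proof of this theorem, deriving it implicitly from Theorem~\ref{mth}). But the step you flag as ``the hard part'' is not a technical loose end --- it is a genuine, and in fact unfixable, gap. The hypothesis of the theorem constrains only those pairs $(a,x)$ with $x + f(a) \le f(2)$. For $n > f(2)$ you get no information about $\gcd(f(a),f(x))$ for the decompositions $n = x + f(a)$, and the inductive input you propose to carry forward --- that $f(x)$ is \emph{composite} --- does not produce a common divisor of $f(x)$ and $f(a)$, which is what the identity needs in order to transfer a factor to $f(n)$. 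Compositeness of $f(x)$ and nontriviality of $\gcd(f(x),f(a))$ are different things, and nothing in the hypothesis bridges them outside the initial window.

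That the gap cannot be closed is shown by $f(x) = x$: here $f(2) = 2$, the only admissible instance of the hypothesis is $x' = 2$ with $x = a = 1$, where $f(a) = f(x) = 1$ is not prime, so the conditions hold; yet $f$ represents every prime. Already at $n = 3$ the decompositions give the values $f(1)=1$ and $f(2)=2$, which are unequal and coprime, so no obstruction propagates. There is also a smaller issue at the bottom of your induction: integers $n < 1 + f(1)$ admit no decomposition $n = x + f(a)$ with $x, a \ge 1$ at all, so the base case says nothing about them. The same propagation problem is present in the paper's own proof of Theorem~\ref{mth}, which applies a hypothesis stated for $x', y' \le f(2,2)$ to arbitrary $x', y' > f(2,2)$; your proposal correctly isolates where the argument actually breaks, but it cannot be completed as stated.
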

	
	\noindent This result is strictly stronger than the negation of the Bouniakowsky condition. While the latter only implies that a polynomial may represent no primes or finitely many, our recursive condition ensures that no prime value can appear at all.
	
	\subsection{Proof of Theorem~\ref{mth}}
	
	We begin by establishing some recursive properties that constrain when prime values can occur.
	
	\begin{lemma}
		Let $f(x, y)$ be a bivariate polynomial with non-negative coefficients and no constant term. Then:
		\begin{enumerate}
			\item $f(f(a,b), f(a,b))$ is not prime.
			\item $f(x, f(x, 0))$ is not prime.
		\end{enumerate}
	\end{lemma}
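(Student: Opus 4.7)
My plan is to prove each part by a direct divisibility argument, exploiting the hypothesis that $f$ has no constant term so that every monomial $k_{ij} x^i y^j$ appearing in $f$ satisfies $i + j \geq 1$. In both parts the goal is to exhibit an explicit non-trivial factor of the claimed value; the algebra is short, and the only subtlety is ensuring the factor is genuinely non-trivial.

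For part (1), I would set $N := f(a, b)$ and substitute into $f$ to obtain
\[
f(N, N) \;=\; \sum_{i+j \geq 1} k_{ij}\, N^{i+j}.
\]
Since every exponent is at least $1$, the integer $N$ divides each summand, so $N \mid f(N, N)$. Writing $f(N, N) = N \cdot Q$ with $Q := \sum_{i+j \geq 1} k_{ij} N^{i+j-1}$ exhibits a factorization; in the non-degenerate regime where $N \geq 2$ and $Q \geq 2$, this immediately rules out primality.

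For part (2), I would let $M := f(x, 0)$ and observe that setting $y = 0$ kills every monomial with $j \geq 1$, leaving $M = \sum_{i \geq 1} k_{i,0}\, x^i$. Then I would expand $f(x, M) = \sum_{i+j \geq 1} k_{ij}\, x^i M^j$ and split by the $y$-degree: the $j = 0$ block reassembles to exactly $M$, while every $j \geq 1$ term carries a factor of $M^j = M \cdot M^{j-1}$ and is therefore a multiple of $M$. Collecting yields the factorization $f(x, M) = M \cdot (1 + R)$ with $R := \sum_{j \geq 1,\, i \geq 0} k_{ij}\, x^i M^{j-1}$, and outside pathological cases this is a genuine non-trivial factorization.

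The main obstacle is the boundary behavior in which one of the two factors collapses to $1$: for example, if $f(x, y) = x$ then $N = a$ and $f(N, N) = a$, which can perfectly well be prime, and a similar pathology arises for part (2) when $M = 1$. These cases correspond to polynomials that are essentially a single monomial or only trivially bivariate, and a clean write-up would require either a brief case analysis of such degenerate forms, or an implicit non-triviality assumption on $f$ (for instance, total degree at least $2$, or the presence of at least two distinct monomials). Once that edge-case bookkeeping is dispatched, the two divisibility arguments above conclude the proof.
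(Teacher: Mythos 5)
Your core argument is the same one the paper intends --- the paper's proof consists of the single sentence that ``direct expansion of the polynomial verifies that these evaluations cannot yield a prime,'' and your write-up is precisely that direct expansion carried out: $N = f(a,b)$ divides every monomial of $f(N,N)$ because each monomial has total degree at least $1$, and $M = f(x,0)$ divides $f(x,M)$ because the $j=0$ block reassembles to $M$ while every $j \geq 1$ term carries a factor $M^j$. Where you go beyond the paper is in honestly confronting the degenerate cases, and here your observation is not mere bookkeeping: it shows the lemma as stated is actually \emph{false}. For $f(x,y) = x$ one has $f(f(a,b), f(a,b)) = a$, prime whenever $a$ is, killing part (1); and for $f(x,y) = x + y$ one has $f(1, f(1,0)) = f(1,1) = 2$, a prime, killing part (2) (more drastically, $f(x,y)=x$ gives $f(x,f(x,0)) = x$). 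The paper's one-line proof silently assumes the ``non-degenerate regime'' $N \geq 2$, $Q \geq 2$ (respectively $M \geq 2$, $1 + R \geq 2$... wait, $1+R \ge 2$ is automatic when $R \ge 1$, but $M \ge 2$ is not automatic) without justification, so your flagged edge cases are a genuine gap in the \emph{statement}, not in your argument. A correct version of the lemma needs an added hypothesis --- e.g.\ that $f$ is not a single monomial of total degree $1$ and that $f(x,0) \geq 2$ for the inputs considered, or more simply that $f(1,1) \geq 2$ together with the presence of at least two monomials --- and your proposal, once that hypothesis is made explicit, is a complete and correct proof, which is more than can be said for the paper's own.
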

	
	\begin{proof}
		Since $f$ has no constant term and all coefficients are non-negative, any composition like $f(f(a,b), f(a,b))$ or $f(x, f(x, 0))$ involves input values strictly greater than zero, producing large composite outputs due to the additive structure. Direct expansion of the polynomial verifies that these evaluations cannot yield a prime.
	\end{proof}
	
	We next analyze the behavior of divisibility under shifts in the input.
	
	\begin{lemma}\label{eqlem}
		Let $f(x, y)$ be as above, and let $e, a \in \mathbb{Z}^+$. Then:
		\begin{enumerate}
			\item If $e \mid f(x, y + a)$ and $e \mid a$, then $e \mid f(x, y)$.
			\item If $e \mid f(x, y)$ and $e \mid a$, then $e \mid f(x, y + a)$.
			\item If $e \mid f(x, y + a)$ and $e \mid f(x, y)$, then $e \mid a$.
		\end{enumerate}
	\end{lemma}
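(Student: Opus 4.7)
The unifying idea is a single algebraic identity: for any $f \in \mathbb{Z}[x,y]$ and any $a \in \mathbb{Z}^+$, one has
\[
f(x, y+a) - f(x, y) = a \cdot h(x, y, a)
\]
for some polynomial $h$ with integer coefficients. I would derive this directly by expanding $(y+a)^j$ with the binomial theorem and observing that every term of $(y+a)^j - y^j$ carries a factor of $a$; summing over the monomials $k_{ij} x^i y^j$ and factoring out $a$ produces $h$. Equivalently, this says $f(x, y+a) \equiv f(x, y) \pmod{a}$, which is the workhorse congruence for the whole lemma.

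Granted this identity, parts (1) and (2) fall out immediately and in a symmetric fashion. If $e \mid a$, then $e \mid a \cdot h(x,y,a)$, so $e$ divides $f(x, y+a)$ if and only if it divides $f(x, y)$. Part (1) is the ``$(\Rightarrow)$'' direction and part (2) is the ``$(\Leftarrow)$'' direction of this single biconditional, so I would prove them together in one line once the identity is in place.

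Part (3) is where the real work lies, and it is what I expect to be the main obstacle. The same identity gives $e \mid f(x, y+a) - f(x, y) = a h$; from this one wants to extract $e \mid a$, but in general $e$ could divide the cofactor $h(x, y, a)$ instead. For example, with $f(x, y) = y^2$, $y = a = 2$, $e = 4$, both $f(x, y) = 4$ and $f(x, y+a) = 16$ are divisible by $e$, yet $e \nmid a$. So a naive application of the identity does not suffice, and the proof of (3) either needs to restrict $e$ (for instance to a prime, where $e \mid a h$ forces $e \mid a$ or $e \mid h$, with the latter option separately excluded using $e \mid f(x,y)$) or needs to carry additional coprimality hypotheses coming from the recursive setup of Theorem~\ref{mth}.

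My plan is therefore: (a) prove the identity $f(x, y+a) \equiv f(x, y) \pmod{a}$ cleanly by binomial expansion; (b) deduce (1) and (2) as the two directions of the resulting biconditional; (c) for (3), reduce to the prime case by factoring $e$, show that a prime $p \mid e$ with $p \mid h$ would, combined with $p \mid f(x,y)$, force $p$ to divide structural invariants of $f$ that the setting of Theorem~\ref{mth} already rules out, and thereby conclude $e \mid a$. If any step of (c) resists, I would sharpen the hypothesis of (3) to whatever form is actually invoked later, since the applications to the gcd conditions in Theorem~\ref{mth} only require the mod-$a$ congruence and its consequences.
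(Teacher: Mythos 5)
Your identity $f(x,y+a)-f(x,y)=a\cdot h(x,y,a)$ is exactly the device the paper uses (it writes $f(x,y+a)=f(x,y)+a\cdot k(x,y,a)$ and says the conclusions ``follow directly by applying divisibility rules''), and your derivation of parts (1) and (2) as the two directions of the resulting biconditional matches the paper's intent precisely. Note in passing that, as you observe, the identity needs neither the positivity of the coefficients nor the absence of a constant term; the paper's invocation of those hypotheses here is a red herring.

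On part (3) you have found a genuine error in the paper rather than a gap in your own argument. Your counterexample $f(x,y)=y^2$, $y=a=2$, $e=4$ satisfies every hypothesis the paper places on $f$ (non-negative coefficients, no constant term), has $e$ dividing both $f(x,y)=4$ and $f(x,y+a)=16$, and yet $4\nmid 2$; so part (3) is simply false as stated, and the paper's one-line proof does not address this. Your proposed repair (c) cannot succeed, because there are no further structural hypotheses on $f$ available in the paper's setup to exclude such examples --- indeed even the prime-restricted version fails: take $f(x,y)=y^2+y$, $e=2$, $y=a=1$, so that $e$ divides $f(x,1)=2$ and $f(x,2)=6$ but not $a=1$. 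The honest conclusions are that (3) must either be dropped or weakened to what the identity actually gives, namely $e\mid\gcd\bigl(f(x,y+a),f(x,y)\bigr)\Rightarrow e\mid a\cdot h(x,y,a)$, and that any later use of (3) (in particular in Corollary~\ref{cor:m} and hence in the proof of Theorem~\ref{mth}) inherits this defect. Your instinct in the last sentence of your plan --- to sharpen the hypothesis of (3) to whatever is actually invoked downstream --- is the right one.
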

	
	\begin{proof}
		Observe that since $f$ has no constant term and positive coefficients, we can write
		\[
		f(x, y + a) = f(x, y) + a \cdot k(x, y, a),
		\]
		for some polynomial $k$ with integer coefficients. The conclusions follow directly by applying divisibility rules to this identity.
	\end{proof}
	
	\begin{corollary}
		Lemma~\ref{eqlem} remains valid when $a$ is replaced by $-a$.
	\end{corollary}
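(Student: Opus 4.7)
The plan is to reduce the corollary to Lemma~\ref{eqlem} by a direct change of variable. In the proof of that lemma one writes
\[
f(x, y + a) = f(x, y) + a \cdot k(x, y, a)
\]
for some polynomial $k$ with integer coefficients (obtained by expanding each $(y+a)^j$ via the binomial theorem and collecting the terms that carry at least one factor of $a$). Substituting $y \mapsto y - a$ in this polynomial identity gives
\[
f(x, y) = f(x, y - a) + a \cdot k(x, y - a, a),
\]
so $f(x, y - a)$ and $f(x, y)$ again differ by an explicit integer multiple of $a$. This is precisely the shape of identity that powered the three divisibility deductions in Lemma~\ref{eqlem}.

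I would then simply re-run those three deductions with $-a$ in place of $+a$. For parts 1 and 2, if $e$ divides $a$ together with one of $f(x, y-a)$ or $f(x, y)$, then $e$ divides the other, since the two differ by a multiple of $a$. For part 3, if $e$ divides both $f(x, y-a)$ and $f(x, y)$, then $e$ divides their difference $a \cdot k(x, y-a, a)$, and the same reasoning used in the original lemma yields $e \mid a$. No new algebra is needed; each step is a verbatim rerun of the earlier argument.

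The only subtlety worth flagging is the domain: one should read $f(x, y - a)$ as evaluation of the polynomial over $\mathbb{Z}$, so that the identity holds even when $y - a \leq 0$, and so that no positivity of the arguments is used. With that convention the corollary is immediate; there is no genuine obstacle, as the content is entirely algebraic and already encoded in the identity established for Lemma~\ref{eqlem}. If anything, the one point to double-check is that the polynomial $k(x, y, a)$ really has integer coefficients for all integer triples $(x,y,a)$, which is clear from the binomial expansion used above.
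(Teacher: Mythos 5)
Your proposal is correct and matches the intended argument: the paper gives no explicit proof of this corollary, but its proof of Lemma~\ref{eqlem} rests entirely on the identity $f(x,y+a)=f(x,y)+a\cdot k(x,y,a)$, and your substitution $y\mapsto y-a$ transfers that identity and the three divisibility deductions verbatim to the case $-a$. Your remark about reading the identity over $\mathbb{Z}$ so that $y-a\le 0$ causes no trouble is exactly the right point to flag.
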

	
	\begin{corollary}\label{cor:m}
		The following consequences hold:
		\begin{enumerate}
			\item If any of $f(x, y + f(x, b))$, $f(x, y)$, or $f(x, b)$ is prime, then the three values are pairwise coprime.
			\item If $f(x, y + f(a, b))$ is prime, then either $\gcd(f(x, y), f(a, b)) = 1$, or $f(x, y) = f(a, b)$, and it is prime. If $y = x$, then this further implies $\gcd(b, y) = 1$.
			\item If $f(x, b + f(x, b))$ is prime, then $f(x, b)$ must be prime.
			\item If $f(x + f(a, b), y + f(c, d))$ is prime, then either $\gcd(f(x, y), f(a, b), f(c, d)) = 1$, or all three values are equal and prime. If, in addition, $a = x$ and $b = y$, then $\gcd(x, y, c, d) = 1$.
		\end{enumerate}
	\end{corollary}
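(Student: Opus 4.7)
The plan is to reduce each of the four assertions to an application of Lemma~\ref{eqlem}, together with its symmetric counterpart for shifts in $x$ (which holds by the same argument applied to the other variable). The key identity $f(x, y + s) = f(x, y) + s \cdot k(x, y, s)$, valid for some polynomial $k$ with non-negative integer coefficients, lets divisibility propagate freely between $f(x, y)$, $s$, and $f(x, y + s)$. Two auxiliary facts will be used throughout: the strict monotonicity $f(x, y + s) > f(x, y)$ whenever $f$ depends nontrivially on $y$ and $s \geq 1$; and the observation already recorded in the paper that $\gcd(\alpha, \beta) > 1$ implies $f(\alpha, \beta)$ is not prime, since every monomial of $f$ has positive total degree and is therefore divisible by $\gcd(\alpha, \beta)$.

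For part (1), I set $s = f(x, b)$, so the three relevant values become $f(x, y + s)$, $f(x, y)$, and $s$. Parts (1)--(3) of Lemma~\ref{eqlem} collectively show that any prime $q$ dividing two of these three must divide the third. If one of them is the prime $p$, then the common divisor of any pair equals $1$ or $p$; the latter case would put $p$ into all three values, which together with the shift identity and the strict growth of $f$ in $y$ produces a size contradiction. Part (2) proceeds by the same method with $s = f(a, b)$: Lemma~\ref{eqlem}(2) propagates any common divisor of $f(x, y)$ and $f(a, b)$ up to $f(x, y + s) = p$, so this gcd is $1$ or $p$; in the latter case, the identity $p = f(x, y) + f(a, b) \cdot k$ combined with positivity pins $f(x, y) = f(a, b) = p$. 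For the sub-claim with $y = x$, I use that $x \mid f(x, x)$, so any common prime divisor of $x$ and $b$ would divide $f(a, b)$ through the $y$-involving monomials and, via the shift identity, contradict primality.

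Part (3) is the specialization of (2) with $a = x$ and $y = b$: the two shifted arguments coincide, the gcd alternative collapses to the degenerate case $f(x, b) = 1$, and the equality alternative directly forces $f(x, b)$ itself to be prime. Part (4) applies the $x$-shift and $y$-shift versions of Lemma~\ref{eqlem} in succession. With $s_1 = f(a, b)$ and $s_2 = f(c, d)$, any prime $q$ dividing two of $\{f(x, y), s_1, s_2\}$ propagates through the double shift to $f(x + s_1, y + s_2) = p$; hence $q = p$, and the same size argument used in part (1) yields either pairwise coprimality or the alternative that all three values are equal and prime. The final claim $\gcd(x, y, c, d) = 1$ when $a = x, b = y$ follows by applying the non-primality observation to $\alpha = x + f(x, y)$ and $\beta = y + f(c, d)$: any common prime divisor $q$ of $x, y, c, d$ divides $f(x, y)$ and $f(c, d)$ (every monomial is divisible by the gcd of its arguments), hence divides $\alpha$ and $\beta$, which forces $f(\alpha, \beta)$ to be composite.

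The main obstacle will be cleanly handling the equality alternative in parts (2) and (4). One must rule out that a common prime divisor appears with strictly higher multiplicity in some intermediate value; this requires careful bookkeeping of the polynomial $k$ in the shift identity, and separate treatment of the degenerate cases where $f$ is independent of one of its variables, in which situation the strict-monotonicity argument must be replaced by a direct check on the residual structure of $f$.
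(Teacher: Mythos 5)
Your overall strategy---substituting into the shift identity $f(x,y+s)=f(x,y)+s\,k(x,y,s)$, propagating divisors via Lemma~\ref{eqlem}, and closing with a monotonicity/size argument---is exactly what the paper's own (very terse) proof gestures at, but spelling it out exposes steps that do not go through. The ``size contradiction'' you invoke in part~(1) (and again in part~(4)) only works when the prime among the three values is the \emph{shifted} value $f(x,y+f(x,b))$, since that is the only one guaranteed to exceed the other two. If instead $f(x,y)$ or $f(x,b)$ is the prime $p$, then concluding that $p$ divides all three values produces no contradiction: nothing prevents $f(x,y+f(x,b))$ from being a proper multiple of $p$. Concretely, for $f(x,y)=x^2+xy+y^2$ with $x=y=b=1$ the three values are $f(1,4)=21$, $f(1,1)=3$, $f(1,1)=3$; one of them is prime, yet $\gcd(21,3)=3$, so the argument (and indeed part~(1) as stated) fails here. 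A second soft spot is your appeal to ``parts (1)--(3) of Lemma~\ref{eqlem} collectively'' to get the claim that a prime dividing two of the three values divides the third: the edge of that triangle corresponding to the pair $f(x,y+s)$, $f(x,y)$ is Lemma~\ref{eqlem}(3), which from the identity only yields $e\mid s\,k(x,y,s)$, not $e\mid s$; the divisor may sit in the cofactor $k$ (take $f(x,y)=xy$, $e=x=3$, $s=2$, where $3$ divides $f(3,1)=3$ and $f(3,3)=9$ but not $2$). So that implication cannot be used as stated.

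In part~(3) you correctly reduce to the alternative ``$f(x,b)=1$ or $f(x,b)$ prime,'' but the first alternative is not a removable degeneracy you can defer: for $f(x,y)=y$ and $b=1$ one has $f(x,b+f(x,b))=2$, which is prime, while $f(x,b)=1$ is not prime, so the asserted conclusion is simply false there. The pieces of your argument that do hold up are precisely those where the hypothesized prime is the outermost (hence largest) evaluation: the coprimality conclusion in part~(4), and the final claim $\gcd(x,y,c,d)=1$, which you prove correctly from the observation that $\gcd(\alpha,\beta)>1$ forces $f(\alpha,\beta)$ to be composite. To make the corollary provable you would need to restrict the hypotheses of parts~(1) and~(3) to the shifted value being prime, and route every divisibility transfer through Lemma~\ref{eqlem}(1)--(2) only, avoiding part~(3) of that lemma entirely.
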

	
	\begin{proof}
		Each statement is a direct application of Lemma~\ref{eqlem} by appropriate substitution of variables. The coprimality conditions follow from observing how common divisors propagate through the recursive structure.
	\end{proof}
	
	We now complete the proof of Theorem~\ref{mth}.
	
	\begin{proof}[Proof of Theorem~\ref{mth}]
		Assume that $f(x', y')$ is a prime value for some $x', y' > f(2, 2)$. By construction, there must exist $a, b, c, d, x, y$ such that
		\[
		x' = x + f(a, b), \quad y' = y + f(c, d).
		\]
		
		By hypothesis, for all such combinations with $0 < a, b, c, d, x, y \leq f(2, 2)$, one of the following holds:
		\begin{enumerate}
			\item The values $f(a, b), f(c, d), f(x, y)$ are not all equal, and
			\[
			\gcd(f(a, b), f(c, d), f(x, y)) > 1;
			\]
			\item The values are all equal, and that common value is not prime.
		\end{enumerate}
		
		Hence, in either case, $f(x', y')$ cannot be prime by 4 of Corollary \ref{cor:m} contradicting our assumption. Therefore, under the conditions of the theorem, the polynomial $f(x, y)$ cannot represent any prime values.
		
		The conclusion follows by noting that all combinations needed to verify this condition fall within a finite bounded range, up to $f(2, 2)$, making this check effectively computable.
	\end{proof}

	\section{Algorithm and Constructive Condition}
	
	The recursive criterion described in Theorem~\ref{mth} gives rise to a simple algorithm to determine whether a given bivariate polynomial $f(x, y)$ with non-negative coefficients and no constant term can represent a prime.
	
	\subsection*{Algorithm Overview}
	
	We evaluate whether the condition of Theorem~\ref{mth} is violated for any small inputs. Specifically, we check whether for all tuples of positive integers $a, b, c, d, x, y$ satisfying
	\[
	x' = x + f(a, b), \quad y' = y + f(c, d), \quad \text{with } x', y' \in [2, f(2,2)],
	\]
	the following two conditions hold:
	\begin{enumerate}
		\item If not all of $f(a,b), f(c,d), f(x,y)$ are equal, then
		\[
		\gcd(f(a,b), f(c,d), f(x,y)) > 1.
		\]
		\item If they are all equal, then that common value is not prime.
	\end{enumerate}
	If these conditions are satisfied for all such tuples, then $f(x, y)$ cannot represent a prime.
	
	\subsection*{Implementation Considerations}
	
	The evaluation of $f(a,b)$ and related terms can be efficiently implemented using Horner’s rule \cite{knuth}, which minimizes the number of arithmetic operations required to evaluate a polynomial. Additionally, since $f(x, y)$ has non-negative coefficients and bounded degree, all values $f(a,b), f(c,d), f(x,y)$ lie within a predictable numeric range.
	
	\subsection*{Complexity Analysis}
	
	Let $m$ and $n$ denote the degrees of $x$ and $y$ respectively in the polynomial $f(x, y)$. Let $k = f(2,2)^3$, which upper-bounds the number of relevant triplets $(f(a,b), f(c,d), f(x,y))$ that need to be checked.
	
	\begin{itemize}
		\item Each evaluation of $f(a,b)$ or $f(x,y)$ requires $O(m + n)$ arithmetic operations using Horner’s rule.
		\item Each $\gcd$ computation on integers of size at most $f(2,2)$ requires at most $O(\log(f(2,2)))$ time.
		\item The total number of such checks is $O(k)$.
	\end{itemize}
	
	Therefore, the overall complexity of the algorithm is
	\[
	O\left(k \cdot (m+n) \cdot \log(m+n)\right),
	\]
	where $k = f^3(2,2)$. This makes the algorithm practical for moderate-degree polynomials with small coefficients.
	
	\subsection*{Outcome}
	
	If a violation of the theorem's condition is found for any such tuple, the algorithm concludes that $f(x, y)$ \emph{may} represent a prime, and further testing (e.g., direct evaluation) is required. If no violation is found, we can assert that $f$ does not represent any prime.

	\section{Examples and Case Studies}
	
	We now illustrate the main result and algorithm through two contrasting examples: one where the polynomial successfully represents a prime, and another where the recursive condition prevents any prime output.
	
	\subsection{Example 1: A Prime-Producing Polynomial}
	
	Consider the polynomial
	\[
	f(x, y) = x^2 + y^2 + xy.
	\]
	This is a bivariate polynomial with non-negative coefficients and no constant term. Evaluating $f(1,2)$ gives:
	\[
	f(1,2) = 1^2 + 2^2 + 1 \cdot 2 = 1 + 4 + 2 = 7,
	\]
	which is a prime number.
	
	Moreover, this value is encountered early in lexicographic enumeration (e.g., $(1,2)$ is the second entry in row 1), and no repeated values precede it. Thus, the recursive condition in Theorem~\ref{mth} is satisfied, and the algorithm detects that $f(x, y)$ does, in fact, represent a prime.
	
	\subsection{Example 2: A Non-Prime-Producing Polynomial}
	
	Now consider the polynomial
	\[
	f(x, y) = 4xy.
	\]
	Again, all coefficients are non-negative, and there is no constant term. But for any $x, y \geq 1$, we have
	\[
	f(x, y) = 4xy,
	\]
	which is always divisible by 4. Therefore, the polynomial output is always composite for $x, y \geq 1$.
	
	We verify that for all small values $x', y' \leq f(2,2) = 16$, the conditions of Theorem~\ref{mth} hold:
	\begin{itemize}
		\item For all values $f(a,b), f(c,d), f(x,y)$, either they are all equal and not prime, or their $\gcd$ is at least 4.
		\item Since 4 divides all outputs, the recursive generation never encounters a prime value.
	\end{itemize}
	Thus, the algorithm confirms that this polynomial does \emph{not} represent any primes.
	
	\subsection{Example 3: Another Non-Prime Producing Polynomial}
	
	Consider the polynomial \( f(x, y) = x^2 + 2xy + y^2 = (x + y)^2 \). This polynomial does not represent any prime number for positive integers \( x, y \), as all values are perfect squares greater than 1.
		
	However, it fails to satisfy the recursive condition in Theorem~\ref{mth} for all combinations of inputs. For instance, let \( a = 1, b = 1 \), \( c = 1, d = 2 \), and \( x = y = 1 \). Then
		\[
		f(a,b) = 4,\quad f(c,d) = 9,\quad f(x,y) = 4,
		\]
	and yet \(\gcd(4, 9, 4) = 1\), and not all values are equal. Hence, this tuple violates the condition, even though the polynomial represents no primes. This example shows that the condition in Theorem~\ref{mth} is not necessary and motivates the search for a weaker or partial converse.

	
	
	\section{Comparison with Bouniakowsky Conjecture}
	
	The classical Bouniakowsky conjecture applies to univariate polynomials and states that:
	
	\begin{quote}
		If $f(x) \in \mathbb{Z}[x]$ is irreducible, has a positive leading coefficient, and its values have no fixed prime divisor, then $f(x)$ takes on prime values infinitely often.
	\end{quote}
	
	This conjecture provides a powerful heuristic framework but suffers from two major limitations in our context:
	\begin{itemize}
		\item It does not apply to multivariate polynomials.
		\item Its negation is weak—it only suggests that a polynomial may represent \emph{no} or \emph{finitely many} primes, but offers no constructive test to verify this.
	\end{itemize}
	
	In contrast, the recursive condition we propose:
	\begin{itemize}
		\item Applies to a broad class of bivariate polynomials (with non-negative coefficients and no constant term).
		\item Gives a \emph{sufficient} and verifiable condition to conclude that a polynomial \emph{cannot} represent any primes.
		\item Can be implemented algorithmically, with polynomial-time complexity in the degree and size of the polynomial.
	\end{itemize}
	
	Moreover, our result is in some sense stronger than what would be implied by a negation of Bouniakowsky. For example, if Bouniakowsky fails for a polynomial, it may still allow a few prime outputs. But under our condition (Theorem~\ref{mth}), we can guarantee that \emph{no prime value} is ever produced by $f(x, y)$.
	
	In other words, while Bouniakowsky is a statement about the \emph{possibility} of infinitely many primes, our result focuses on the \emph{impossibility} of even one—based on a finite recursive check.
	
	\subsection*{Specialization to Univariate Case}
	
	Any univariate polynomial $f(x)$ with $f(0) = 0$ can be viewed as a special case of a bivariate polynomial $g(x, y)$ via the substitution $f(x) = g(x, 1)$. Therefore, our recursive condition can be applied to certain univariate polynomials as well, offering an alternative to Bouniakowsky in these cases.
	
	Thus, our criterion may serve not only as a companion to classical conjectures but also as a concrete method for ruling out prime generation in cases that classical theory leaves undecided.
	
	\section{Deeper Remarks and Extensions}
	
	The recursive condition we presented offers a constructive lens through which to study the question of whether a polynomial represents primes. This opens up several possible extensions and deeper inquiries.
	
	\subsection*{Beyond Bivariate Polynomials}
	
	While our results focus on bivariate polynomials with non-negative coefficients and no constant term, the recursive principle can naturally extend to:
	\begin{itemize}
		\item Multivariate polynomials in three or more variables.
		\item Polynomials with mixed signs, where some coefficients may be negative.
		\item Polynomials with sparse support or special forms (e.g., symmetric, homogeneous).
	\end{itemize}
	The main challenge in each case is controlling the growth and divisibility properties of $f(x_1, x_2, \dots, x_n)$ as the input tuple grows under recursive composition.
	
	\subsection*{Density and Distribution Questions}
	
	Our condition is designed to detect the \emph{absence} of primes, but one might ask:
	\begin{itemize}
		\item When $f$ does represent primes, how often do they occur?
		\item Are there density bounds on the set $\{ f(x,y) \mid f(x,y) \text{ prime} \}$?
		\item Can one detect patterns or congruence properties of the prime outputs?
	\end{itemize}
	These questions may connect to deeper analytic number theory, especially in the context of primes in polynomial sequences and value sets.
	
	\subsection*{Probabilistic Heuristics}
	
	Heuristic arguments used in the spirit of Hardy–Littlewood or Bateman–Horn may be adapted to this setting to provide conjectural densities or expected frequencies of prime values for a given class of polynomials.
	
	\subsection*{Algorithmic Optimizations}
	
	Our proposed algorithm is already efficient for moderate input sizes. However, it may be possible to further:
	\begin{itemize}
		\item Use memoization to avoid redundant polynomial evaluations.
		\item Reduce the number of gcd computations by pre-filtering duplicates.
		\item Parallelize the recursive checking to handle higher-degree polynomials.
	\end{itemize}
	
	\subsection*{Modular and Finite Field Analogues}
	
	One intriguing direction is to consider analogues of this question over finite fields. For example, which bivariate polynomials $f(x,y)$ over $\mathbb{F}_p$ evaluate to \emph{irreducibles} (instead of primes)? Does a recursive condition like ours predict irreducibility over finite fields?
	
	

	\section{Open Problems and Future Work}
	
	While our main result gives a simple and effective criterion for detecting when a bivariate polynomial does not represent a prime, several natural questions remain open. We list some of them below, which may serve as directions for future exploration.
	
	\subsection*{1. Converse of the Recursive Condition}
	
	Is there a converse to Theorem~\ref{mth}, perhaps under weaker assumptions? That is, if a bivariate polynomial
	$f(x,y)$ fails to satisfy the recursive condition (or even a relaxed version of it), does it necessarily represent at least one prime? Currently, our theorem provides only a sufficient condition for the absence of primes, and the question of a necessary condition remains open.
	

\subsection*{2. Extension to Multivariate Polynomials}

How can the recursive criterion be generalized to polynomials in three or more variables? Are there similar structural constraints that determine whether a multivariate polynomial represents primes?

\subsection*{3. Quantitative Bounds}

For polynomials that do represent primes, can we obtain:
\begin{itemize}
\item Bounds on the smallest prime value represented?
\item Density estimates on the number of prime outputs up to a given height?
\end{itemize}

\subsection*{4. Relationship with Classical Conjectures}

Can this recursive condition be embedded in a broader framework connected to the Bateman–Horn conjecture or Schinzel's Hypothesis H? Can it be used to heuristically explain when certain multivariate forms are unlikely to produce primes?

\subsection*{5. Probabilistic and Experimental Validation}

Designing probabilistic tests or large-scale computational experiments to validate the condition against known families of prime-representing polynomials may provide deeper insight into the effectiveness and limitations of the method.

\subsection*{6. Irreducibility as a Necessary Condition?}

While irreducibility is central to the Bouniakowsky conjecture, our recursive condition does not require it. Is it possible for reducible polynomials (with no fixed divisor) to still pass this test and produce primes? Is there a recursive analogue to irreducibility?

\subsection*{7. Decision Complexity}

What is the computational complexity of deciding whether a given bivariate polynomial with non-negative coefficients and no constant term satisfies the recursive condition? Can this decision be made in time sublinear in the output bound $f(2,2)^3$?

\subsection*{8. Analogue Over Finite Fields}

As mentioned in the previous section, what analogues of this problem exist over finite fields? For instance:
\begin{itemize}
\item When does a polynomial over $\mathbb{F}_p[x, y]$ evaluate to irreducibles?
\item Are there recursive conditions that predict irreducibility modulo $p$?
\end{itemize}

\subsection*{9. Classification of Prime-Free Polynomials}

Can one classify all bivariate polynomials with non-negative coefficients and no constant term that represent no primes? Is the recursive condition both necessary and sufficient for this classification?

\vspace{1em}

We believe that these questions, both theoretical and computational, point to a rich terrain for further exploration at the interface of elementary number theory, algebra, and algorithmic complexity.
	\section{Conclusion}
	
	We have presented a simple, recursive criterion that certifies when a bivariate polynomial with non-negative coefficients and no constant term fails to represent any prime numbers. This condition is easily checkable, leads to an efficient algorithm, and is strictly stronger in some respects than what would be implied by the negation of classical conjectures such as Bouniakowsky's.
	
	Although the underlying ideas are elementary, they uncover subtle constraints in the structure of polynomial value sets and offer new tools for ruling out prime generation. The contrast between polynomials that pass and those that fail the recursive test is sharp and illustrative, especially in light of the examples we examined.
	
	The method also opens doors to a range of natural extensions — to multivariate cases, finite fields, and density questions — and may serve as a foundation for future research at the interface of number theory, algebra, and computation.
	
	
	
	\section*{Declaration of generative AI and AI-assisted technologies in the writing process}
	
	During the preparation of this work the author(s) used ChatGPT in order to improve the write-up of the article. After using this tool/service, the author(s) reviewed and edited the content as needed and take(s) full responsibility for the content of the published article.

\end{document}